\newcommand{\bbar}{\begin{pmatrix}}
\newcommand{\ebar}{\end{pmatrix}}
\newcommand{\bdm}{\begin{displaymath}}
\newcommand{\edm}{\end{displaymath}}
\newcommand{\beq}{\begin{equation}}
\newcommand{\beqa}{\begin{eqnarray}}
\newcommand{\beqas}{\begin{eqnarray*}}
\newcommand{\eeq}{\end{equation}}
\newcommand{\eeqa}{\end{eqnarray}}
\newcommand{\eeqas}{\end{eqnarray*}}
\newcommand{\dd}{\textup{d}}
\newcommand{\C}{{\mathbb C}}
\newcommand{\D}{{\mathbb D}}
\newcommand{\real}{{\mathbb R}}
\newcommand{\SSS}{{\mathbb S}}
   \newtheorem{theorem}{Theorem}
   \newtheorem{lemma}[theorem]{Lemma}
 \theoremstyle{remark}
\begin{document}

\title{Remarks on the boundary curve of a constant mean curvature topological disc}
\author{David Brander}
\address{Department of Applied Mathematics and Computer Science\\ Matematiktorvet, Building 303 B\\
Technical University of Denmark\\
DK-2800 Kgs. Lyngby\\ Denmark}
\email{dbra@dtu.dk}
\author{Rafael Lop\'ez}
\address{Departamento de Geometr\'ia  y Topolog\'ia \\ Universidad de Granada \\
 18071 Granada, Spain}
\email{rcamino@ugr.es}

\keywords{Constant mean curvature, holomorphic quadratic differentials, capillary surface.}
\subjclass[2000]{Primary 53A10, 49Q10; Secondary 35J60, 76D45}
\begin{abstract}
We discuss some consequences of the existence of the holomorphic quadratic Hopf differential on 
a conformally immersed constant mean curvature topological disc with analytic boundary.
In particular, we derive a formula for the mean curvature as a weighted average of the normal
curvature of the boundary curve, and a condition for the surface to be totally umbilic in terms
of the normal curvature. 
 \end{abstract}
\maketitle
\section{Introduction}
A constant mean curvature (CMC) surface is a  model for a fluid interface.  As such, a basic
configuration of interest is that of a CMC topological disc with boundary. It is well known that any CMC surface
admits a real analytic conformal parameterization.
We will also assume that the boundary is a real analytic curve in $\real^3$;  under reasonable regularity 
assumptions \cite{muller2002} one can then show that the surface extends analytically across the boundary.
Applying the smooth Riemann mapping theorem, we thus
 consider a conformal CMC immersion $f: U \supset \overline \D\to \real^3$, where $\overline \D$ is the closed unit disc,
and $U$ an open subset of $\C$.

Given a simple closed curve $C$ in $\real^3$,
Hildebrandt \cite{hildebrandt1970} showed that there exists at least one (possibly branched)
CMC topological disc spanning $C$, if the
mean curvature $H$ is not too large: if $C$ lies in a ball of radius $R$ then solutions exist provided that $|H| \leq 1/R$.

In this note we record some observations concerning the consequences that the existence of the holomorphic 
quadratic differential of Hopf \cite{hopf1951, Hopfbook}  has  for the relationship between the boundary curve
$\gamma := f \big|_{\partial \D}$ and the surface.  For a general real analytic 
space curve, the choice of a surface normal $N$ along the curve
and a constant $H$ locally determine a unique CMC $H$ surface.  However, if the curve is closed and the surface is required
to be a regular topological disc, then we will see that the normal curvature $\kappa_n$ and the geodesic torsion $\tau_g$
(obtained from the derivative of $N$ along the curve) are
closely related and must satisfy some integral relations, amongst them:
\beqa
 \label{integral1}
\int_0^{2\pi} (H- \kappa_n(t))\left(\frac{\dd s}{\dd t}\right)^2 \dd t = 0   \\
\int_0^{2\pi}  \tau_g(t)\left(\frac{\dd s}{\dd t}\right)^2 \dd t  = 0,  \label{integral2}
\eeqa
where $z=e^{it}$ is the restriction of the conformal parameterization to $\partial \D$ and $s$ is the
arc-length with respect to the metric induced from $\real^3$. 
The first equation gives an expression for $H$ as an average of the
normal curvature with respect to the measure $\mu= (\dd s/ \dd t)^2 \dd t$:
\[
H = \frac{1}{M} \int_0^{2\pi} \kappa_n(t)  \left(\frac{\dd s}{\dd t}\right)^2 \dd t, \quad \quad
      M := \int_0^{2\pi} \left( \frac{\dd s}{\dd t}\right)^2 \dd t.
\]
It is important to note that these formulae are only valid for a parameterization $t$ such that $z=e^{it}$ extends 
to the disc as a conformal coordinate.

 It obviously follows from \eqref{integral1} that the constant $H$ lies in the range
$[\min(\kappa_n), \max(\kappa_n)]$.  We will in fact conclude:
\begin{theorem}  \label{mainthm}
Let $f: \D \cup \partial \D \to \real^3$ be a CMC $H$ immersion of a topological disc with regular analytic boundary 
$C=f(\partial \D)$.  Let $\kappa_n$ and $\tau_g$ be the normal curvature and geodesic torsion of the curve
$f|_{\partial \D}$. 
\begin{enumerate}
\item  
  The surface is totally umbilic if and only if at least one of  $\kappa_n$ or $\tau_g$ is constant. 
In this case, both are constant and $\kappa_n \equiv H$ and $\tau_g \equiv 0$.
	\item
Otherwise,  the mean curvature $H$ must satisfy
\[
\min_{\partial \D} (\kappa_n)  <  H < \max_{\partial \D} (\kappa_n).
\]
\end{enumerate}
\end{theorem}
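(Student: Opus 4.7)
The plan is to exploit the Hopf differential $Q(z)\,\dd z^2$ of the conformal CMC immersion, which under the analyticity hypothesis is holomorphic on $\D$, extends real-analytically to $\overline{\D}$, and vanishes identically precisely when $f$ is totally umbilic. The crucial preliminary step is to derive a pointwise identity along $\partial\D$ relating $\kappa_n$, $\tau_g$ and $Q$, namely
\begin{equation*}
\bigl[(\kappa_n(t)-H)-i\,\tau_g(t)\bigr]\left(\frac{\dd s}{\dd t}\right)^{\!2} \;=\; -\,2\,Q(e^{it})\,e^{2it}.
\end{equation*}
The derivation is a direct computation: write the second fundamental form in the complex tangent coordinate $\zeta = X_u + iX_v$ as $II(X,X) = 2\,\mathrm{Re}(Q\zeta^2) + He^{2\omega}|\zeta|^2$; the tangent vector to $\gamma(t)=f(e^{it})$ corresponds to $\zeta = ie^{it}$, so evaluating $II$ on $\dot\gamma$ and on the pair $(\dot\gamma,J\dot\gamma)$ gives $\kappa_n$ and $\tau_g$ after dividing by $|\dot\gamma|^2 = e^{2\omega} = (\dd s/\dd t)^2$. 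The only delicate point is the sign and factor bookkeeping stemming from $Q = \tfrac{1}{4}(e-g) - \tfrac{i}{2}f$ in the conformal chart.

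Now introduce the auxiliary function $g(z) := Q(z)\,z^2$, which is holomorphic on $\D$, continuous on $\overline{\D}$, and satisfies $g(0) = 0$. Integrating the displayed identity over $t\in[0,2\pi]$ and using the mean-value equality $\tfrac{1}{2\pi}\int_0^{2\pi} g(e^{it})\,\dd t = g(0) = 0$ recovers \eqref{integral1} and \eqref{integral2}. For the easy direction of~(1), $Q \equiv 0$ makes $g\equiv 0$, so the identity yields $\kappa_n \equiv H$ and $\tau_g \equiv 0$ along $\partial\D$. Conversely, if $\kappa_n$ is constant on $\partial\D$, then \eqref{integral1} together with positivity of $(\dd s/\dd t)^2$ shows this constant must equal $H$; hence $\mathrm{Re}\,g\equiv 0$ on $\partial\D$. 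Since $\mathrm{Re}\,g$ is harmonic on $\D$, the maximum principle forces $\mathrm{Re}\,g \equiv 0$ throughout $\D$, so $g$ is a purely imaginary constant, and $g(0)=0$ makes $g \equiv 0$, i.e., $Q \equiv 0$. The argument when $\tau_g$ is constant is identical with $\mathrm{Im}$ in place of $\mathrm{Re}$.

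Part~(2) is then a short consequence. The boundary identity exhibits $H$ as an average of $\kappa_n$ with respect to the strictly positive measure $(\dd s/\dd t)^2\,\dd t$; if $H$ attained either extremum of $\kappa_n$, the integrand of \eqref{integral1} would be one-signed with zero integral and hence identically zero, making $\kappa_n \equiv H$ constant, whereupon part~(1) forces $f$ to be umbilic. The main obstacle is Step~1, the complex bookkeeping yielding the boundary identity; once it is available, parts~(1) and~(2) reduce to the classical fact that a bounded holomorphic function with vanishing real (respectively imaginary) part on the unit circle is a pure imaginary (respectively real) constant, combined with the vanishing $g(0)=0$.
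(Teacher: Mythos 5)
Your proof is correct and follows essentially the same route as the paper: your boundary identity $[(\kappa_n-H)-i\tau_g]\left(\dd s/\dd t\right)^2=-2e^{2it}Q(e^{it})$ is precisely the paper's relation $\alpha+i\beta=-2w^2\tilde Q(w)$, the integral identities \eqref{integral1}--\eqref{integral2} are extracted from the vanishing of $z^2Q(z)$ at the origin in both treatments, and part (2) is argued identically (one-signed integrand with zero integral forces $\kappa_n\equiv H$, contradicting non-umbilicity). The one point where you genuinely diverge is the converse direction of part (1): the paper writes out the Fourier expansions of $\alpha=(\kappa_n-H)\nu^2$ and $\beta=-\tau_g\nu^2$ on the circle (both missing all modes $|n|\le 1$, with the coefficients of one determining those of the other) and reads the conclusion off from these, whereas you first use \eqref{integral1} (resp. \eqref{integral2}) and the positivity of $(\dd s/\dd t)^2$ to pin the constant value of $\kappa_n$ (resp. $\tau_g$) to $H$ (resp. $0$), and then invoke the maximum principle for the harmonic function $\mathrm{Re}(z^2Q)$ (resp. $\mathrm{Im}(z^2Q)$) together with $z^2Q\big|_{z=0}=0$ to conclude $Q\equiv 0$. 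Both mechanisms are sound; the maximum-principle route is slightly more self-contained, while the Fourier route yields the additional structural information (the full coefficient relations and the extra identities with integrands weighted by $e^{\pm it}$) that the paper records along the way.
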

Any homotopically trivial simple closed 
curve on a CMC surface can be taken as
$\partial \D$ in  Theorem \ref{mainthm}.  This
places restrictions on the geometry of such curves.
For example,   there are none with constant non-zero geodesic torsion;
  and  none that are lines of curvature ($\tau_g=0$)  unless the surface is totally umbilic.

 If the surface is totally umbilic,  i.e.,~ part of a $2$-sphere or a plane, the geodesic torsion 
along any curve at all is zero, as is the quantity $\kappa_n-H$.
But the characterization of total umbilicity as the vanishing of
  just \emph{one} of these quantities along a closed curve is not
valid if the curve does not bound a topological disc - counterexamples are easily given on
a round cylinder.

Theorem \ref{mainthm} is proved in Section \ref{section2}.  In Section \ref{section3} we
briefly discuss applications to CMC surfaces with circular boundary and to capillary surfaces.

\section{The  Hopf differential and some consequences}  \label{section2}
\subsection{The Hopf differential along a line}  
Let $U \subset \C$ be an open set
and $f: U \to \real^3$ be a conformally parameterized 
immersion,  i.e.,
\[
2 \langle f_z , f_{\bar z} \rangle =  \langle f_x, f_x \rangle = \langle f_y, f_y \rangle = 
   \nu^2 >0,
\]
where $\nu : U \to \real_{>0}$.  Given a choice $N$ of unit normal, the mean curvature is
\[
H= \frac{1}{2} \nu^{-2} \langle f_{xx}+ f_{yy}, N \rangle.
\]
As observed by Hopf \cite{Hopfbook}, if we define 
$Q = \langle N, f_{zz} \rangle$,
then the Codazzi equation gives $Q_{\bar z} = \nu^2 H_{z} /2$, so the mean curvature is constant if and only if
$Q$ is holomorphic.  Assuming this,  the \emph{Hopf differential}
is the (coordinate independent) holomorphic quadratic differential  
\[
\mathcal{H} = Q(z) \dd z^2, \quad Q = \langle N, f_{zz} \rangle.
\]

Suppose that $U$ contains the real line.
We want an expression for $\mathcal{H}$ along $\real$, in terms of the geometric 
data of the curve $\gamma(x)=f(x,0)$.  
   We first assume that $x$ is the arc-length parameter along $\gamma$,
i.e., that
\[
\langle f_x(x,0), f_x (x,0) \rangle = \nu^2(x,0) = 1.
\]
This assumption can be achieved on a sufficiently small open set containing the real line
by changing to the conformal coordinates
$\zeta = \int_0^z \hat \nu(z) \dd z$, where $\hat \nu$ is the holomorphic extension of 
$\nu(x,0)$.  Under this assumption,the Darboux frame along $\gamma$ is:
\[
X(x) = f_x(x,0), \quad Y(x) = f_y(x,0), \quad N(x) = N(x,0).
\]
 The geodesic and normal curvatures, $\kappa_g$ and $\kappa_n$,  and
 geodesic torsion $\tau_g$ of $\gamma$ are defined by:
\beqas
X^\prime &=& \kappa_g Y + \kappa_n N, \\
Y^\prime &=& -\kappa_g X + \tau_g N, \\
N^\prime &=& -\kappa_n X -\tau_g Y.
\eeqas
Now the coefficient $Q$ of $\mathcal{H}$ is:
\[
Q= \frac{1}{4} \left( \langle N, f_{xx} \rangle - \langle N, f_{yy} \rangle \right)
    - \frac{1}{2}i \langle N, f_{xy} \rangle.
\]
Along $\gamma$, we have 
$\langle N, f_{xx}  \rangle = \kappa_n$ and
$\langle N,  f_{yy} \rangle  = 2H- \langle N, f_{xx} \rangle$.
Finally, $f_{xy} = Y^\prime$, so $\langle N, f_{xy} \rangle = \tau_g$. Therefore, along $y=0$,
\[
Q = \frac{1}{2} ( \kappa_n -H - i \tau_g).
	\]
Along the curve, the second fundamental form is $II=\kappa_n \dd x^2 + 2\tau_g \dd x \dd y + (2H-\kappa_n) \dd y^2$,
and the principal curvatures are $\kappa_\pm = H \pm \sqrt{(H-\kappa_n)^2+ \tau_g^2}$.
Thus the geodesic torsion is a measure of how far the curve $\gamma$ deviates from being a line of curvature,
and an umbilic point
corresponds to $\kappa_n-H=\tau_g=0$, that is, to the zeros of the Hopf differential.
	
Under a change of coordinates,  the Hopf differential transforms as 
$\mathcal{H} = 
\langle N, f_{zz} ( \dd \zeta/ \dd z )^2 + f_z \dd^2 \zeta/\dd z^2 \rangle \dd z^2
= \langle N, f_{zz}\rangle    (\dd \zeta/\dd z)^2 \dd z^2$.
Dropping the assumption that $\nu(x,0)=1$, we have:
\[
Q(x,0) = \frac{1}{2} (\kappa_n(x) - H - i  \tau_g(x)) \nu^2(x,0).
\]

\subsection{Proof of Theorem \ref{mainthm}}
Now let $f: U \supset \overline \D \to \real^3$ be a conformally parameterized CMC $H$ immersion, as in
the introduction.
Let $\log$ be any branch of the logarithm function,
and set $z = -i \log(w)$.  Then, on an annulus containing $\SSS^1$, we have the situation of the
previous subsection, where the functions $\kappa_n(x)$, $\tau_g(x)$ and $\nu(x,0)$ are periodic
with period $2 \pi$.  Since $\dd z^2 = -(1/w^2) \dd w^2$, we have, in the $w$ coordinate,
\[
\mathcal{H} = -\frac{1}{2 w^2}\left(\alpha(w) + i \beta(w) \right) \dd w^2 = \tilde Q(w) \dd w^2,
\]
where,  writing $\hat \kappa_n$, $\hat \tau_g$ and $\hat \nu$
 for the holomorphic extensions of  $\kappa_n(x)$, $\tau_g(x)$ and $\nu(x,0)$
respectively,
\[
\alpha(w)=(\hat \kappa_n(w) - H)  \hat \nu^2(w) \quad \hbox{and} \quad
\beta(w) = - \hat \tau_g(w)  \hat \nu^2(w)
\]
 are  holomorphic on a neighbourhood of $\SSS^1$,
 real-valued along $\SSS^1$, and $\hat \nu$ is non-vanishing along $\SSS^1$.
 Because $\mathcal{H}=\tilde Q(w) \dd w^2$
 is holomorphic on $\D$, it follows that  the function 
\[
g(w) = \alpha(w) + i \beta(w) = -2 w^2 \tilde Q (w)
\]
 vanishes at least to second order at $w=0$.  
This means that the integrals along $\SSS^1$ of the functions $g(w)$, $g(w)/w$ and
$g(w)/w^2$ are all zero, which gives us:
\beqas
\int_0^{2\pi} \alpha(e^{it}) \dd t =0 =  \int_0^{2\pi} \beta(e^{it}) \dd t , \\
\int_0^{2\pi} \alpha(e^{it}) e^{\pm i t} \dd t = 0=\int_0^{2\pi} \beta(e^{it}) e^{\pm i t} \dd t .
\eeqas
The first line amounts to  \eqref{integral1}  and \eqref{integral2},
and the second line gives analogous identities with the integrands multiplied by $\cos(t)$ and by $\sin(t)$.
The fact that $\alpha$ and $\beta$ are real on $\SSS^1$, together with the vanishing conditions above, gives us the Fourier expansions:
\bdm
\alpha(w) = \sum_{n=2}^\infty (\bar \alpha _n w^{-n} + \alpha_n w^n) \quad \hbox{and} \quad
\beta(w) = \sum_{n=2}^\infty (i \bar \alpha _n w^{-n} -i  \alpha_n w^n),
\edm
which precisely encodes the relationship between 
the quantities $\nu$, $\tau_g$ and $\kappa_n-H$ that follow from the holomorphicity
of the Hopf differential on $\D$.  From this one can read off, for example, that $\tau_g$ is constant
if and only if $\kappa_n-H$ is constant, and in this case both are zero.
 Since the vanishing of the Hopf differential characterizes umbilics, this proves the first part of
 Theorem \ref{mainthm}. For the second part, \eqref{integral1} implies that
$\min(\kappa_n) \leq H \leq \max(\kappa_n)$. Equality cannot occur, because then the vanishing of
the integral \eqref{integral1} of $H-\kappa_n$ would imply that $\kappa_n \equiv H$, i.e., that the 
surface is totally umbilic.

\section{Some applications of Theorem \ref{mainthm}}  \label{section3}
\subsection{CMC discs bounded by a circle}
Given a closed curve in $\real^3$, Brezis and Coron \cite{breziscoron1984} showed that there are at least two 
geometrically distinct CMC $H$ discs spanning the curve, provided $H$ is small enough (see also \cite{steffen1986}).
However, it is not known precisely how many there are.
 The most basic case imaginable is where the boundary is a 
circle, which we may as well take to be of radius $1$. If $0<|H| <1$ there are obviously at least two CMC $H$ discs
bounded by this circle, a large and a small spherical cap on a sphere of radius $1/|H|$.
 It is a long standing open question, going back as far as the 1980's \cite{koiso1986, ebmr1991} whether or not there are more solutions in the case $0<|H| < 1$. 
 A survey of this problem can be found in \cite{lopezbook}.
From Theorem \ref{mainthm} we have that the unit sphere is the only solution for $|H|=1$, and,
for solutions that are not totally umbilic, 
\[
-1 \leq \min_{\partial \D} (\kappa_n)  < H < \max_{\partial \D} (\kappa_n)  \leq 1.
\]
Brito and Sa Earp \cite{britosaearp} essentially  showed this using the   \emph{balancing formula} or \emph{flux formula},
which was found by R. Kusner in his Ph.D. thesis,  published in
\cite{KKS}, and a proof given for the immersed case in \cite{lopezmontiel}. It can be stated as follows:
if $f: M \to \real^3$ is a CMC $H$ immersion of a compact surface with boundary, and
$\gamma(s)$ the arc-length parameterization of the boundary, then
\[
\int_{\partial \Sigma} \langle Y, a \rangle \dd s + H \int_{\partial \Sigma} \langle \gamma \times \gamma^\prime, a \rangle \dd s=0,
\]
where $Y$ is the unit conormal and $\gamma$ a unit speed parameterization  of the boundary curve,
and $a$ is any constant vector in $\real^3$.  The flux formula can be proved (\cite{lopezmontiel}) by observing that the 
$1$-form $(Hf + N) \times \dd f$ is closed if $f$ has constant mean curvature $H$.
This formula differs from the integral formula \eqref{integral1}, 
as it does not depend on the topology of $\Sigma$ and does not relate to
 any conformal structure. 

 For the special case that $\partial \Sigma$ is a circle $\gamma(s) = (\cos s, \sin s,0)$, taking $a=(0,0,1)$
allows one to again obtain the above bounds on $H$:
the surface normal along $\gamma$ is $N= -\kappa_n \gamma + \kappa_g  a$ and 
the conormal is $Y= -\kappa_g \gamma - \kappa_n a $.  So the flux formula becomes
\[
 \int_{\partial \Sigma} ( \kappa_n  - H  )  \frac{\dd s}{\dd t} \dd t=0.
\]
This is not the same as \eqref{integral1} (and note that the integral in \eqref{integral1}  has
  a priori a different value for every choice of conformal coordinate),
 but it does  give the same upper and lower bounds on $H$ for the case of circular boundary.

Note that the holomorphicity of the Hopf differential is  used in \cite{alp1999} to prove that 
a \emph{stable} CMC topological disc with circular boundar must be a spherical cap, and
in \cite{palmer2003} to
give a condition for the instability of a higher genus CMC surface with circular boundary.

\subsection{Application to capillary surfaces}
If $M$ is a CMC surface with boundary $\partial M$, and the boundary curve $\gamma$ lies in another surface $S$ (the support surface), such that the
surfaces intersect with constant contact angle along  $\gamma$, then $M$ is called a \emph{capillary surface}.  If $M$ is a topological disc, then this is
a model for the surface of a drop of liquid resting on a solid surface in the absence of gravity. 
The contact angle depends on the physical properties of the solid (hydrophilic/hydrophobic).
Two examples are shown to the right in Figure \ref{examplesfig}, each constructed by adding a ruled
support surface with a prescribed contact angle to the boundary of a CMC surface.

It is known that if the support surface is part of a plane or a sphere then a capillary surface with
disc topology is necessarily totally umibilic \cite{nitsche1985, finnmccuan, choe}.
This follows directly from Theorem \ref{mainthm}, because any curve in a sphere or plane is a line
of curvature (zero geodesic torsion) and the  Terquem-Joachimsthal theorem \cite{spivak3} states that if the intersection of two surfaces $M_1$ and $M_2$ is a line of curvature on $M_1$,
then it is also a line of curvature on $M_2$ if and only if the surfaces intersect at constant angle. 

\begin{figure}[ht]
\centering
\begin{small}
$
\begin{array}{ccc}
\includegraphics[height=25mm]{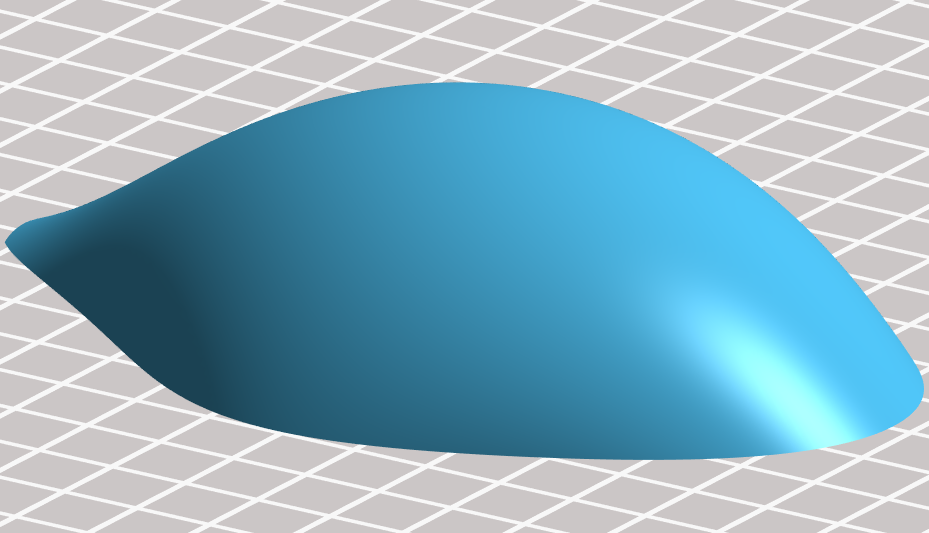}  \,\, & \,\,
\includegraphics[height=25mm]{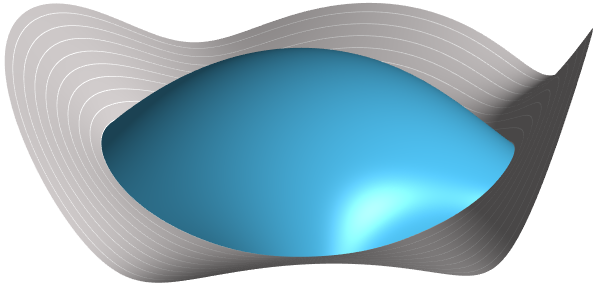}   \,\, & \,\,
\includegraphics[height=25mm]{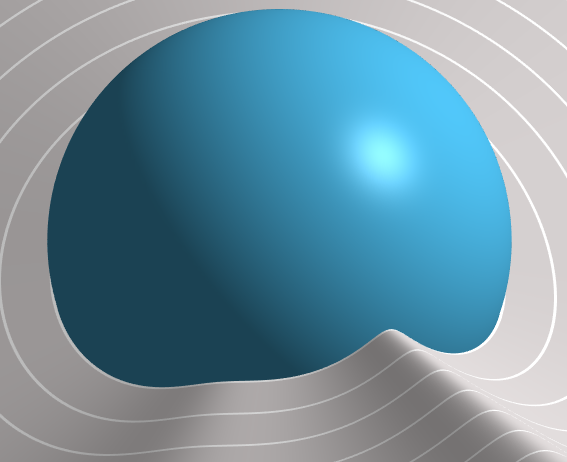}   
\end{array}
$
\end{small}
\caption{Left:  this CMC surface cannot meet a plane with
constant contact angle because it is not a spherical cap. Middle, right:
capillary surfaces with contact angle $\pi/2$.  The surface to the right is part of a sphere,
so the intersection curve  is necessarily a line of curvature in the support surface. } \label{examplesfig}
\end{figure}   

To state the consequence just mentioned in full generality, one can extend the  Terquem-Joachimsthal theorem  
to curves that are not necessarily lines of curvature:
\begin{lemma}
Suppose that a surface $M_1$ meets a surface $M_2$  along a regular curve $\gamma$. Denote by $\tau_{g, j}$ the geodesic
torsion of $\gamma$ with respect to the surface $M_j$. Then the surfaces meet at constant angle if and only 
if $\tau_{g,1} = \tau_{g,2}$.
\end{lemma}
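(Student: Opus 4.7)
The plan is to work in the common Darboux frame and reduce the statement to a single ODE for the contact angle.

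First I would set up notation. Parameterize $\gamma$ by arclength and let $T = \gamma'$ be the unit tangent, which is shared by both surfaces. For $j=1,2$, let $N_j$ be a smooth unit normal to $M_j$ along $\gamma$ and let $Y_j = N_j \times T$ be the corresponding conormal. Each triple $\{T,Y_j,N_j\}$ is an orthonormal frame and the Darboux equations give
\[
N_j' = -\kappa_{n,j}\,T - \tau_{g,j}\,Y_j.
\]
Because the two frames share the tangent $T$, they differ by a rotation in the normal plane to $\gamma$; let $\theta(s)$ denote this angle, so
\[
N_2 = \cos\theta\,N_1 + \sin\theta\,Y_1, \qquad
Y_2 = -\sin\theta\,N_1 + \cos\theta\,Y_1.
\]
The two surfaces meet at constant angle along $\gamma$ precisely when $\theta$ is constant.

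The key step is to differentiate $\cos\theta = \langle N_1,N_2\rangle$ along $\gamma$. Using the Darboux formula and the fact that $\langle T,N_j\rangle = 0$, we compute
\[
-\sin\theta\,\theta'
 = \langle N_1',N_2\rangle + \langle N_1,N_2'\rangle
 = -\tau_{g,1}\langle Y_1,N_2\rangle - \tau_{g,2}\langle N_1,Y_2\rangle.
\]
Substituting the rotation formulas gives $\langle Y_1,N_2\rangle = \sin\theta$ and $\langle N_1,Y_2\rangle = -\sin\theta$, so
\[
-\sin\theta\,\theta' = (\tau_{g,2} - \tau_{g,1})\sin\theta,
\]
which at every point where $\sin\theta \neq 0$ yields the clean identity
\[
\theta'(s) = \tau_{g,1}(s) - \tau_{g,2}(s).
\]
From this the equivalence is immediate on the open subset of $\gamma$ where the surfaces are transverse.

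The only subtle point (the expected main obstacle) is handling points where $\sin\theta = 0$, i.e.~where the surfaces are tangent. There one can no longer divide by $\sin\theta$, but the situation collapses: at such a point $N_1 = \pm N_2$ and $Y_1 = \pm Y_2$, so the two Darboux frames coincide up to sign and $\tau_{g,1} = \pm\tau_{g,2}$. If the contact angle is assumed constant, then either $\theta \equiv 0 \pmod{\pi}$ everywhere, giving $\tau_{g,1} = \tau_{g,2}$ directly, or $\sin\theta$ never vanishes and the ODE argument applies. Conversely, if $\tau_{g,1} \equiv \tau_{g,2}$, the ODE $\theta' = 0$ holds on the open set where $\sin\theta \neq 0$, so $\theta$ is locally constant there; by continuity of $\theta$ and smoothness of $\gamma$ this extends across the isolated tangency points, forcing $\theta$ to be globally constant. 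This completes the proof.
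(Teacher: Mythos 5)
Your computation is essentially the paper's: both proofs differentiate $\langle N_1,N_2\rangle$ along $\gamma$ using the Darboux equations and the identities $\langle Y_1,N_2\rangle=\sin\theta=-\langle N_1,Y_2\rangle$. The difference is that you then divide by $\sin\theta$ to get the ODE $\theta'=\tau_{g,1}-\tau_{g,2}$, which forces a separate treatment of tangency points, and that is where your write-up is soft in two places. First, the pointwise claim that $\tau_{g,1}=\pm\tau_{g,2}$ wherever $\sin\theta=0$ is not justified: the geodesic torsion is $-\langle N_j',Y_j\rangle$, so it depends on the derivative of the normal field, i.e.\ on the shape operator of $M_j$ at that point, and two surfaces that are merely tangent at one point of $\gamma$ need not have equal torsions there. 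You do not actually need this claim --- in the forward direction the only case where $\sin\theta$ vanishes is $\sin\theta\equiv 0$ along all of $\gamma$ (since $\theta$ is constant), and there the two Darboux frames agree up to a single global sign, so the torsions genuinely coincide --- but as written it is a false step. Second, in the converse you assert that tangency points are isolated, which does not follow from the hypotheses; a connectedness argument (the level set of a non-degenerate value of $\theta$ is open and closed) repairs this, but it is cleaner to avoid dividing by $\sin\theta$ altogether, as the paper does: the undivided identity $\langle N_1,N_2\rangle'=(\tau_{g,1}-\tau_{g,2})\langle Y_2,N_1\rangle$ gives the converse in one line, since $\tau_{g,1}\equiv\tau_{g,2}$ forces $\langle N_1,N_2\rangle$ to be constant with no case analysis. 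So: same approach, correct in substance, but tighten the two points above.
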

\begin{proof}
Assume $\gamma$ is parameterized by arc-length. Let $N_j$ and $Y_j = N_j \times \gamma^\prime$ denote the unit normal and co-normal along
 $\gamma$ with respect to the surface $M_j$.  Then $\langle Y_1, N_2 \rangle = \sin(\theta) = - \langle Y_2, N_1 \rangle$,
where $\theta$ is the angle from $N_1$ to $N_2$. Thus,
\beqas
\langle N_1, N_2 \rangle ^\prime &=&  -\tau_{g,1} \langle Y_1, N_2 \rangle - \tau_{g,2} \langle Y_2, N_1 \rangle \\
&=& (\tau_{g,1} - \tau_{g,2}) \langle Y_2, N_1 \rangle.
\eeqas
If $\theta$ is constant, i.e., $\langle N_1, N_2 \rangle^\prime \equiv 0$, then  the product $(\tau_{g,1} - \tau_{g,2}) \langle Y_2, N_1 \rangle$ is
zero. Then either $\tau_{g,1} \equiv \tau_{g,2}$ or  $\langle Y_2, N_1 \rangle$ vanishes at some point; but if
the latter occurs then $\langle Y_2, N_1 \rangle=-\sin \theta \equiv 0$, because $\theta$ is constant, 
and hence $N_1 \equiv  \pm N_2$, and $\tau_{g,1} \equiv \tau_{g,2}$.
 The converse is also clear.
\end{proof}
Together with Theorem \ref{mainthm}, this implies
\begin{theorem}  \label{capillarythm}
Let $M$ be a closed topological disc, immersed as a capillary surface with support surface $S$ 
and analytic boundary.  Let
$\gamma$ denote the boundary curve of $M$. Then $M$ is totally umbilic if and only if $\gamma$ has constant
geodesic torsion in the support surface $S$.  In this case, the geodesic torsion is zero.
\end{theorem}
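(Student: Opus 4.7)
The plan is to chain together the two results immediately preceding the statement, namely Theorem \ref{mainthm} and the Lemma on geodesic torsions of two surfaces meeting along a common curve. The capillary condition is exactly the hypothesis of that Lemma, so the first step is to translate the constant contact angle condition into an equality of geodesic torsions.

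More precisely, I would proceed as follows. Let $\tau_{g,M}$ denote the geodesic torsion of $\gamma$ computed with respect to $M$, and $\tau_{g,S}$ the geodesic torsion of $\gamma$ computed with respect to the support surface $S$. Since $M$ is a capillary surface, $M$ and $S$ meet along $\gamma$ at a constant angle, so the Lemma gives $\tau_{g,M} \equiv \tau_{g,S}$ along $\gamma$. In particular, $\tau_{g,S}$ is constant along $\gamma$ if and only if $\tau_{g,M}$ is constant along $\gamma$.

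Now I would invoke Theorem \ref{mainthm}. Since $M$ is a closed topological disc immersed as a CMC surface with analytic boundary $\gamma$, part (1) of that theorem says that $M$ is totally umbilic if and only if $\tau_{g,M}$ is constant along $\gamma$, and in that case $\tau_{g,M} \equiv 0$. Combining this with the equality $\tau_{g,M} = \tau_{g,S}$ from the Lemma yields both directions of the stated equivalence, and also gives $\tau_{g,S} \equiv 0$ in the totally umbilic case.

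There is essentially no hard step here; the work has been done in Theorem \ref{mainthm} and the preceding Lemma. The only point that deserves a brief sanity check is that the hypotheses of Theorem \ref{mainthm} really are available, i.e., that $M$ is a CMC immersion of a topological disc with regular analytic boundary; this is built into the statement of Theorem \ref{capillarythm} (capillary surfaces are CMC, and analyticity of $\gamma$ is assumed). Once this is noted, the proof reduces to a two-line application of the Lemma followed by Theorem \ref{mainthm}.
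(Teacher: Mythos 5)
Your proposal is correct and is exactly the argument the paper intends: the paper proves Theorem \ref{capillarythm} by the single sentence ``Together with Theorem \ref{mainthm}, this implies,'' i.e., by using the extended Terquem--Joachimsthal Lemma to identify the geodesic torsions of $\gamma$ in $M$ and in $S$ via the constant contact angle, and then applying part (1) of Theorem \ref{mainthm}. No differences worth noting.
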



\end{document}